\newtheorem{thm}{Theorem}
\newtheorem{cor}[thm]{Corollary}
\newtheorem{prop}[thm]{Proposition}
\theoremstyle{definition}
\newtheorem*{rem}{Remark}
\newtheorem*{question}{Question}
\newtheorem{definition}[thm]{Definition}
\title{$T$-Polynomial Convexity and Holomorphic Convexity}
\author{Blake J. Boudreaux}
\address[Blake J. Boudreaux]{The University of Western Ontario}
\email{bboudre7@uwo.ca}
\date{}
\begin{document}
\maketitle
\begin{abstract}
	We compare the $T$-polynomial convexity of Guedj with holomorphic convexity away from the support of $T$. In particular we show an Oka--Weil theorem for $T$-polynomial convexity, as well as present a situation when the notions of $T$-polynomial convexity and holomorphic convexity of $X\setminus\text{Supp }T$ coincide in the context of complex projective algebraic manifolds.
\end{abstract}

\section{Introduction and Preliminaries}
A compact set $K\subset\mathbb{C}^n$ is said to be polynomially convex if it agrees with its hull
\begin{equation}\label{PolyHull}
	\widehat{K}_{\mathbb{C}^n}=\left\{z\in X\,:\,|P(z)|\leq\sup_K|P|\text{ for all holomorphic polynomials $P$}\right\}.
\end{equation}
Polynomial convexity can be generalized to Stein manifolds in a straightforward manner by replacing polynomials with entire functions in the above definition and retains many of the same properties. This is appropriately dubbed holomorphic convexity~\cite{FoFoWo20}. On the other hand, there is no intrinsic definition of polynomial convexity on complex projective manifolds. Indeed, the compact set $K=\{[1:e^{i\theta}:e^{-i\theta}]\in\mathbb{CP}^2\,:\,0\leq\theta\leq 2\pi\}$ is polynomially convex when viewed as a subset of $\mathbb{CP}^2\setminus\{z_0=0\}\cong\mathbb{C}^2$, but it is not polynomially convex when viewed as a subset of $\mathbb{CP}^2\setminus\{z_1=0\}\cong\mathbb{C}^2$.

However, viewing $\mathbb{C}^n$ as a subset of complex projective space $\mathbb{CP}^n$, polynomials in $\mathbb{C}^n$ are simply rational functions in $\mathbb{CP}^n$ having poles in the complex hyperplane at infinity. This suggests a generalization of polynomial convexity to a complex projective manifold $X$ by replacing the family of polynomials in~\eqref{PolyHull} with the family of meromorphic functions on $X$ having poles in a fixed hypersurface $Z\subset X$.

Since the hypersurface $Z$ corresponds to a closed positive current of integration $[Z]$ having bidegree $(1,1)$, this motivates a further extension of these ideas, due to Guedj~\cite{Gu99}. Suppose a positive current $T$ of bidegree $(1,1)$ on a complex projective manifold $X$ has \textit{integral cohomology class}; i.e., $[T]\in H^2_{\text{dR}}(X,\mathbb{R})\cong H^2(X,\mathbb{R})$ lies in the image of the morphism $H^2(X,\mathbb{Z})\to H^2(X,\mathbb{R})$ induced by the containment $\mathbb{Z}\hookrightarrow\mathbb{R}$. Then there exists a holomorphic line bundle $L$ on $X$ and a (singular) metric $\varphi$ of $L$ for which $\text{d}\text{d}^c\varphi=T$~\cite[Theorem 5]{Sh95}. We consider compact sets $K\subset X$ for which the hull
\begin{equation}\label{PolyHull2}
	p_T(K)=\left\{z\in X\,:\,|\sigma|e^{-k\varphi}(z)\leq\sup_K|\sigma|e^{-k\varphi}\text{ for all }\sigma\in\Gamma(X,L)\text{ and }k\in\mathbb{N}\right\}
\end{equation}
and $K$ agree.\footnote{Here and throughout we make the slight abuse of notation that $|\sigma|e^{-k\varphi}(z)$ stands for $|\sigma|_{k\varphi}(z)$, the action of the metric $k\varphi$ on the section $\sigma$ at the point $z\in X$.} Note that if $T$ is a current of integration corresponding to a complex hypersurface $Z$, then the objects with respect to which the hull is being taken in~\eqref{PolyHull2} take the form $|\sigma/s^k|$, where $s$ is a holomorphic section of some line bundle on $X$ whose zero divisor coincides with $Z$; that is, they become precisely the family of meromorphic functions on $X$ with poles on $Z$.

There is also the following weaker notion, known as $T$-polynomial convexity.\footnote{It should be noted that, while the notions of convexity corresponding to $p_T(K)$ and $\widehat{K}^T$ are analogous to convexity with respect to polynomial and plurisubharmonic functions, respectively, we call the latter \textit{$T$-polynomial convexity} to be consistent with the literature.} Note that it does not require $T$ to have integral cohomology class.
\begin{definition}
	Let $T$ be a positive closed current of bidegree $(1,1)$ on a complex projective manifold $X$ and let $K$ be a compact subset of $X$. We define the $T$-polynomially convex hull of $K$ by
\[
	\widehat{K}^T=\left\{z\in X\,:\,f(z)\leq\sup_K f\text{ for all }f\in\mathcal{C}_T(X)\text{ such that }\text{d}\text{d}^cf\geq -T\right\},
\]
	where $\mathcal{C}_T(X)$ denotes the set of functions $f\in L^1(X)$ such that $\exp(f+\varphi)$ is continuous whenever $\varphi$ is a local $\text{d}\text{d}^c$-potential of $T$. Note in particular that any $f$ in $\mathcal{C}_T(X)$ is lower semicontinuous.
\end{definition}

Polynomial convexity is of interest, in particular, in view of the Oka--Weil theorem~\cite{St07}. In its simplest formulation, it states that a holomorphic function defined on a neighbourhood of a polynomially convex compact set in $\mathbb{C}^n$ is the uniform limit on $K$ of a sequence of polynomials. This has been extended to $T$-polynomial convexity when $T$ is a current of integration corresponding to a positive divisor~\cite[Theorem 3.4]{Gu99}. Our first result is to extend this further. We need an additional assumption on $T$, called condition (C). This means that $\widehat{K}^T\subset\subset X\setminus\text{Supp }T$ whenever $K\subset\subset X\setminus\text{Supp }T$. Here $c_1:\text{Pic}(X)\to H^2(X,\mathbb{Z})$ denotes the first Chern class homomorphism.

\begin{thm}\label{OkaWeil}
	Let $T$ be a positive closed current of bidegree $(1,1)$ satisfying condition (C) on a projective algebraic manifold $X$. Assume $[T]=c_1(L)$ for some positive holomorphic line bundle $L$ on $X$. Suppose that the compact set $K\subset X\setminus\text{Supp }T$ is $T$-polynomially convex and that $f\in\mathcal{O}(K)$. Then there exist $u_j,v_j\in\Gamma (X,L^{N_j})$, $j\in\mathbb{N}$, so that the meromorphic function $u_j/v_j$ approximates $f$ uniformly on $K$ as $j\to\infty$. Furthermore,
\begin{enumerate}
	\item[(i)] $\frac{1}{N_j}\log|v_j|-\varphi\xrightarrow{j\to\infty} 0$ uniformly on compact subsets of $X\setminus\text{Supp }T$, where $\varphi$ is a (singular) metric of $L$ with $\text{d}\text{d}^c\varphi=T$;
	\item[(ii)] $\frac{1}{N_j}\big[v^{-1}_j(0)\big]\xrightarrow{j\to\infty} T$ in the weak sense of currents;
	\item[(iii)] $v^{-1}_j(0)\xrightarrow{j\to\infty}\text{Supp }T$ in the Hausdorff metric;
	\item[(iv)] $\nu\left(\frac{1}{N_j}\big[ v^{-1}_j(0)\big],z\right)\xrightarrow{j\to\infty}\nu(T,z)$ for all $z\in X$.
\end{enumerate}
\end{thm}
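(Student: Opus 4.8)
The plan is to decouple the statement into two independent constructions: first produce the denominators $v_j$ by approximating the current $T$ globally by normalized zero divisors of sections of $L^{N_j}$, which simultaneously yields (i)--(iv); and then, with the $v_j$ in hand, approximate the given $f$ on $K$ by numerators $u_j$ obtained from a weighted $\bar\partial$-solution, in which the $T$-polynomial convexity of $K$ supplies the separating weight. To begin, fix a smooth Kähler form $\omega\in c_1(L)=[T]$ and a singular metric $\varphi$ with $dd^c\varphi=T$. Since $L$ is positive, the spaces $\Gamma(X,L^{N})$ are large for $N\gg 0$, and the asymptotics of the Bergman kernel of $(L^{N},N\varphi)$ furnish sections $v_j\in\Gamma(X,L^{N_j})$ with $\tfrac{1}{N_j}\log|v_j|-\varphi\to 0$ locally uniformly on $X\setminus\text{Supp }T$, which is precisely (i). From the $L^1_{\mathrm{loc}}$-convergence $\tfrac1{N_j}\log|v_j|\to\varphi$ one obtains (ii) by applying $dd^c$ and invoking its continuity on currents, and (iii), (iv) follow from the standard equidistribution theory for zeros of sections of high tensor powers together with the convergence of Lelong numbers.

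For the second part, extend $f$ to a holomorphic function on an open neighbourhood $U\supset\supset K$, chosen so that $\overline U\subset X\setminus\text{Supp }T$, and fix a cutoff $\chi$ with $\chi\equiv 1$ near $K$ and $\text{supp }\chi\subset U$. Then $fv_j$ is a holomorphic section of $L^{N_j}$ over $U$, and $\chi fv_j$ is a global smooth section whose $\bar\partial$ is supported in the shell $S:=\text{supp}(\bar\partial\chi)\subset\subset U\setminus K$. Because $K=\widehat K^T$, for each point of $S$ there is a function in $\mathcal C_T(X)$ with $dd^c(\,\cdot\,)\ge -T$ that is positive there and $\le 0$ on $K$; taking a finite maximum over a cover of $S$ and regularizing produces a single $T$-plurisubharmonic weight $g$ with $\sup_K g=0$ and $\inf_S g=:m>0$. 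One then solves
\[
\bar\partial w_j=\bar\partial(\chi fv_j)
\]
for a section $w_j$ of $L^{N_j}$ via the Hörmander--Andreotti--Vesentini $L^2$-estimate with the metric $N_j(\varphi+g)$, whose curvature current $N_j(T+dd^cg)$ is semipositive; the strict positivity needed to run the estimate is arranged off $\text{Supp }T$ by borrowing a multiple of $\omega$ from the ampleness of $L$. Here condition (C) is used to keep the sublevel set $\{g\le 0\}$ and the shell $S$ compactly inside $X\setminus\text{Supp }T$, so that the whole $\bar\partial$-problem takes place where $T$ vanishes and $\varphi$ is smooth.

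The estimate reads $\int_X|w_j|^2 e^{-2N_j(\varphi+g)}\le C\int_S|\bar\partial\chi|^2|f|^2|v_j|^2 e^{-2N_j(\varphi+g)}$. On $S$ the integrand is controlled by (i), since $|v_j|^2 e^{-2N_j\varphi}\le e^{2N_j\eta_j}$ with $\eta_j\to 0$, while $e^{-2N_jg}\le e^{-2N_jm}$; hence the right-hand side decays like $e^{-2N_j(m-\eta_j)}\to 0$ geometrically. On $K$, where $g\le 0$, one has $e^{-2N_j(\varphi+g)}\ge e^{-2N_j\varphi}$, so $\int_K|w_j|^2 e^{-2N_j\varphi}\to 0$ geometrically as well. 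As $\bar\partial w_j=0$ near $K$, the section $w_j$ is holomorphic there, and a sub-mean-value estimate upgrades this $L^2$-smallness to $\sup_K|w_j|e^{-N_j\varphi}\to 0$. Setting $u_j:=\chi fv_j-w_j\in\Gamma(X,L^{N_j})$ gives $u_j=fv_j-w_j$ on $K$, whence $u_j/v_j=f-w_j/v_j$; since $|v_j|e^{-N_j\varphi}\ge c>0$ on $K$ by (i), the error $\sup_K|w_j/v_j|\to 0$ and $u_j/v_j\to f$ uniformly on $K$.

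The principal obstacle is the first part: producing a single sequence $v_j$ that simultaneously equidistributes to $T$ in all four senses (i)--(iv) \emph{and} is adapted to serve as denominators in the approximation, that is, uniformly nonvanishing on $K$ with the precise size control on the shell $S$ that the $\bar\partial$-estimate consumes. Securing (i) with that uniform control is the crux; once it is in place, properties (ii)--(iv) are comparatively formal consequences of pluripotential theory, and the approximation of $f$ reduces to the weighted $\bar\partial$-argument above with the separating weight $g$ extracted from the $T$-polynomial convexity of $K$.
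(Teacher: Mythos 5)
Your overall architecture is reasonable, and the second half of your argument (the weighted $\bar\partial$-step producing the numerators $u_j$, with the separating weight $g$ extracted from $K=\widehat K^T$ by a finite maximum of functions in $\mathcal{C}_T(X)$) is a legitimate alternative to what the paper does: the paper instead shows $\widehat K^{T_j}\subset\subset U$ for large $j$ using Corollary~\ref{LeviProblem} and compactness of $\mathrm{b}U$, then quotes the already-known Oka--Weil theorem for currents of integration of divisors \cite[Theorem 3.10]{Gu99} and diagonalizes. Your version essentially re-proves that cited result directly, which is fine (modulo routine repairs: twisting by $K_X^*$ to solve $\bar\partial$ on the compact $X$, and noting that (i) only gives $|v_j|e^{-N_j\varphi}\ge e^{-N_j\eta_j}$ with $\eta_j\to 0$ on $K$ rather than a $j$-independent lower bound $c>0$ --- the final error estimate still closes because the $\bar\partial$-solution decays at a fixed geometric rate $e^{-N_jm}$ with $m>0$).

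The genuine gap is the first half: the existence of denominators $v_j$ satisfying (i)--(iv) is precisely Theorem~\ref{Guedjthm}, the technical core of the paper, and it does not follow from ``Bergman kernel asymptotics'' plus ``standard equidistribution theory.'' Demailly-type asymptotics control $\tfrac{1}{2N}\log\sum_k|g_{N,k}|^2$ for an orthonormal basis, i.e.\ the full Bergman kernel; a \emph{single} section $v_j$ with $\tfrac{1}{N_j}\log|v_j|-\varphi\to 0$ uniformly on every compact of $X\setminus\text{Supp }T$ must in particular have its zero divisor escape every such compact and have $|v_j|e^{-N_j\varphi}$ bounded below at a controlled rate there. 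Producing such a section is where all the work lies: the paper builds it (Section 3) by exhausting $X\setminus\text{Supp }T$ by $T$-polynomially convex compacts --- this is exactly where condition (C) is consumed, and your proposal never uses condition (C) for this step --- taking a peak section $h$ near such a compact from Proposition~\ref{Guedjprop}, gluing it with local weights $e^{NP_j}$ concentrated at a dense sequence in $\text{Supp }T$, and correcting by a $\bar\partial$-solution whose sup-norm is controlled via the H\"ormander--Wermer lemma. Likewise, (iv) is not a formal consequence of (ii): weak convergence of currents only yields $\limsup_j\nu(T_j,z)\le\nu(T,z)$, and the reverse inequality requires the vanishing-order estimate for the $L^2$ $\bar\partial$-solution (Kiselman) together with the Siu decomposition argument at the end of Section 3. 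As written, your proposal assumes the hardest part of the statement.
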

\begin{rem}
	The assumptions are $T$ are not surprising, since the tools available to us require the environment in which we are working to be as ``Stein-like'' as possible. Condition (C) and the positivity of $L$ are analogous to the conditions of holomorphic convexity and separability in the definition of a Stein manifold, respectively.
\end{rem}
In the spirit of the previous remark, our next result shows that, in this context, the notion of $T$-polynomial convexity in $X\setminus\text{Supp }T$ is equivalent to the notion of holomorphic convexity on $X\setminus\text{Supp }T$.

\begin{thm}\label{HoloTPoly}
	Let $T$ be a positive closed current of bidegree $(1,1)$ satisfying condition (C) on a projective algebraic manifold $X$. Assume $[T]=c_1(L)$ for some positive holomorphic line bundle $L$. Then $\widehat{K}^T=\widehat K_{X\setminus\text{Supp }T}$ for all compact subsets $K\subset X\setminus\text{Supp }T$, where $\widehat{K}_{X\setminus\text{Supp }T}$ denotes the holomorphically convex hull of $K$ in the Stein manifold $X\setminus\text{Supp }T$.
\end{thm}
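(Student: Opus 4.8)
The plan is to set $M:=X\setminus\text{Supp }T$, which is Stein as recorded in the statement, and to exploit the classical fact that on a Stein manifold the $\mathcal O(M)$-hull of a compact set coincides with its hull with respect to the full cone $\mathrm{PSH}(M)$ of plurisubharmonic functions. Writing $p_T(K)$ for the hull in~\eqref{PolyHull2}, I would prove the theorem by establishing the chain of inclusions
$\widehat K_{X\setminus\text{Supp }T}\subseteq\widehat K^T\subseteq p_T(K)\subseteq\widehat K_{X\setminus\text{Supp }T}$,
so that all three hulls in fact coincide. The first two inclusions are soft and the last carries the analytic content.

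For the first inclusion, observe that since $T$ is supported on $\text{Supp }T$, any $f\in\mathcal C_T(X)$ with $\text{d}\text{d}^cf\ge -T$ restricts to a genuine plurisubharmonic function on $M$ (there $\text{d}\text{d}^cf\ge 0$, and $f$ is lower semicontinuous by the footnote convention). Thus the test functions defining $\widehat K^T$ restrict to a subfamily of $\mathrm{PSH}(M)$, and for $z\in\widehat K_{X\setminus\text{Supp }T}$, which equals the $\mathrm{PSH}(M)$-hull of $K$, every such $f$ satisfies $f(z)\le\sup_Kf$; hence $z\in\widehat K^T$. For the second inclusion I would use that $[T]=c_1(L)$ gives, for each $\sigma\in\Gamma(X,L^k)$, a function $\tfrac1k\log|\sigma|e^{-k\varphi}\in\mathcal C_T(X)$ which by the Lelong--Poincar\'e formula satisfies $\text{d}\text{d}^c\bigl(\tfrac1k\log|\sigma|e^{-k\varphi}\bigr)=\tfrac1k[\operatorname{div}\sigma]-T\ge -T$. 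These functions are admissible in the definition of $\widehat K^T$, so its defining condition is at least as restrictive as that cutting out $p_T(K)$, giving $\widehat K^T\subseteq p_T(K)$.

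The substance of the theorem is the inclusion $p_T(K)\subseteq\widehat K_{X\setminus\text{Supp }T}$. Fix $z\in p_T(K)\cap M$ — in the application $z\in\widehat K^T\subset\subset M$ by condition (C) — together with $h\in\mathcal O(M)$, and aim to show $|h(z)|\le\sup_K|h|$. The key is to realize the plurisubharmonic function $\log|h|$ as a uniform limit, on the compact set $K\cup\{z\}\subset\subset M$, of normalized logarithms of sections: I would construct $\sigma_N\in\Gamma(X,L^N)$ with $\tfrac1N\log|\sigma_N|e^{-N\varphi}\to\log|h|$ uniformly on a neighbourhood of $K\cup\{z\}$. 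Granting this, the defining inequality of $p_T(K)$ reads $|\sigma_N|e^{-N\varphi}(z)\le\sup_K|\sigma_N|e^{-N\varphi}$; applying the monotone map $\tfrac1N\log(\cdot)$, which commutes with $\sup_K$, and letting $N\to\infty$ yields $\log|h(z)|\le\sup_K\log|h|$, as desired. Note that although the comparison happens at the $\tfrac1N\log$-scale, this is precisely the scale at which the positively curved weight sees an order-one separation exponentially, so no information is lost in the limit.

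To construct the $\sigma_N$ I would fix a smooth positively curved metric $\varphi_0$ on the ample bundle $L$, with $\text{d}\text{d}^c\varphi_0=\omega>0$, and a cutoff $\chi$ that is $\equiv 1$ near $K\cup\{z\}$ and compactly supported in $M$. Solving $\bar\partial$ on $X$ for $L^N$ against the weight $N\varphi_0+N\chi(\log|h|+\varphi-\varphi_0)$ — whose singular part is compactly supported in $M$, hence harmless near $\text{Supp }T$, and whose curvature remains positive once $N\omega$ absorbs the bounded error from $\text{d}\text{d}^c\chi$ — is the Hörmander--Demailly $L^2$ (Bergman-kernel) approximation of the weight $\log|h|+\varphi$, and it produces sections with the prescribed asymptotic modulus on $K\cup\{z\}$. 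I expect the main obstacle to be the two-sided control of $\tfrac1N\log|\sigma_N|e^{-N\varphi}$: the $L^2$-estimates readily furnish the upper bound, whereas the matching lower bound — ensuring the limit is exactly $\log|h|$ and that the peak at $z$ survives the $\bar\partial$-correction — requires the sub-mean-value inequality together with an $L^2$ lower bound, and it is here that the positivity of $L$ is indispensable. Condition (C) plays the complementary role of confining $K\cup\{z\}$ to a compact subset of $M$ on which $\varphi$ is smooth and $\log|h|$ is a bona fide plurisubharmonic function, so that the cutoff construction is legitimate and the limit is untouched by the singularities of $\varphi$ along $\text{Supp }T$.
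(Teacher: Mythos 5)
Your outer skeleton is sound and close to the paper's: the inclusion $\widehat K_{X\setminus\text{Supp }T}\subseteq\widehat K^T$ via restriction of the test functions to $\mathrm{PSH}(X\setminus\text{Supp }T)$ and the coincidence of the plurisubharmonic and holomorphic hulls on the Stein manifold $X\setminus\text{Supp }T$ is exactly the paper's first half, and reducing the reverse inclusion to separating $z$ from $K$ by weighted sections of $L^N$ is also the paper's strategy. The difference is that the paper obtains those sections by invoking its Oka--Weil theorem (Theorem~\ref{OkaWeil}), whose proof occupies the rest of the article, whereas you propose to manufacture them on the spot by an $L^2$/Bergman-kernel argument. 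That is where the gap lies.

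Concretely, the weight $N\varphi_0+N\chi(\log|h|+\varphi-\varphi_0)$ is not positively curved, and the defect is not ``a bounded error from $\text{d}\text{d}^c\chi$ absorbed by $N\omega$.'' Writing $\rho=\log|h|+\varphi-\varphi_0$, the curvature is
\[
N(1-\chi)\,\omega+N\chi\,\text{d}\text{d}^c\bigl(\log|h|+\varphi\bigr)+N\bigl(\rho\,\text{d}\text{d}^c\chi+2\,\mathrm{Re}\bigl(\partial\chi\wedge\bar\partial\rho\bigr)\bigr).
\]
The last group scales with $N$ exactly as the good terms do, so enlarging $N$ buys nothing; it lives where $d\chi\neq 0$, where $1-\chi$ may vanish so that $N(1-\chi)\omega$ cannot dominate it; and it is not even locally bounded if $h$ has zeros on the support of $d\chi$. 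Without positivity of the weight you have neither the H\"ormander/Ohsawa--Takegoshi peak section at $z$ nor the sub-mean-value upper bound on $K$ --- i.e., neither half of the ``two-sided control'' you yourself flag as the main obstacle. Repairing this is precisely the content of Proposition~\ref{Guedjprop} (hence of Theorems~\ref{Guedjthm} and~\ref{OkaWeil}): one must first build a global positive singular metric $\psi$ of some $L^M$ that is $\delta$-close to $M\varphi$ near the hull and for which $\{|h|_{\psi}\geq 1\}$ is compactly contained in the region where the cutoff is still $\equiv 1$, so that the $\bar\partial$-data becomes exponentially small in the weight. That construction uses condition (C) and the $T$-polynomial convexity of the hull of $K\cup\{z\}$ in an essential way; in your sketch condition (C) only confines $K\cup\{z\}$ to $X\setminus\text{Supp }T$, which is not where the real work is. The efficient fix is the paper's: apply Theorem~\ref{OkaWeil} to $\widehat{(K\cup\{z\})}^T$ and a holomorphic $f$ with $|f(z)|>\sup_K|f|$, then use conclusion (i) of that theorem to convert separation by $u_j/v_j$ into separation by $|u_j|e^{-N_j\varphi}$.
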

Guedj showed that $X\setminus\text{Supp }T$ is Stein on a compact K\"ahler manifold $X$ whenever $T$ satisfies condition (C) and $T$ is cohomologous to a K\"ahler form~\cite[Theorem 3.8]{Gu99}. If $X$ is a \textit{homogeneous} projective manifold, then every (1,1)-current of integral cohomology class satisfies condition (C), so in such a setting the assumption of condition (C) in the previous statement is redundant. We end the introduction with a natural question.
\begin{question}
	Let $T$ be a positive current of bidegree $(1,1)$ on a complex projective manifold $X$ with $c_1(L)=[T]$ for some positive line bundle $L$. Does $T$ satisfy condition (C)?
\end{question}

\section{$T$-polynomial convexity and holomorphic convexity of $X\setminus\text{Supp }T$}
It is natural to ask under what conditions $p_T(K)=\widehat{K}^T$ holds for all compact subsets $K$. It has been shown that $p_T(K)\subseteq\widehat{K}^T$ with equality whenever $L$ is positive and $X$ is homogeneous~\cite[Proposition 3.2]{Gu99}. Our first observation is that the assumption of homogeneity on $X$ is not necessary for equality. This follows from a somewhat recent extension result, quoted below.

\begin{thm}[{\cite[Theorem B$'$]{CoGuZe2013}}]\label{t.CGZ}
	Let $X$ be a subvariety of a projective manifold $V$ and $L$ be an ample line bundle on $V$. Then any (singular) positive metric of $L|_X$ is the restriction of a (singular) positive metric of $L$ on $V$.
\end{thm}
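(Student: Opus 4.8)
The plan is to translate the statement into the language of $\omega$-plurisubharmonic functions and then solve the resulting extension problem by means of a global upper envelope whose boundary values on $X$ are forced using sections of high powers of $L$. First I would fix a smooth Kähler form $\omega\in c_1(L)$, which exists precisely because $L$ is ample. A (singular) positive metric of $L$ is the same datum as a function $u\in\mathrm{PSH}(V,\omega)$, the $\omega$-plurisubharmonic functions (those $u\in L^1(V)$ with $\omega+dd^c u\geq 0$), the dictionary being that the curvature current of the metric equals $\omega+dd^c u$. Under this correspondence a positive metric of $L|_X$ is an element $\varphi\in\mathrm{PSH}(X,\omega|_X)$, and being ``the restriction of a metric on $V$'' means finding $\Phi\in\mathrm{PSH}(V,\omega)$ with $\Phi|_X=\varphi$. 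Thus the theorem reduces to the pluripotential-theoretic assertion that every $\omega|_X$-psh function on the subvariety $X$ extends to an $\omega$-psh function on the ambient $V$.

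I would then define the candidate extension as the envelope
\[
	\Phi:=\Bigl(\sup\bigl\{\,v\in\mathrm{PSH}(V,\omega)\ :\ v|_X\leq\varphi\,\bigr\}\Bigr)^{*},
\]
where $*$ denotes upper semicontinuous regularization. By Choquet's lemma and the stability of $\mathrm{PSH}(V,\omega)$ under regularized suprema, $\Phi\in\mathrm{PSH}(V,\omega)$ and $\Phi|_X\leq\varphi$ hold automatically. The entire content of the theorem is therefore the reverse inequality $\Phi|_X\geq\varphi$, i.e. that the envelope genuinely attains the prescribed boundary data; this is the crux.

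To force the boundary values I would feed algebraic competitors into the envelope. Ampleness gives, via Serre vanishing applied to $0\to\mathcal I_X\otimes L^k\to L^k\to L^k|_X\to 0$, surjectivity of the restriction $H^0(V,L^k)\to H^0(X,L^k)$ for all large $k$. On $X$, approximate $\varphi$ by the Bergman weights $\varphi_k=\tfrac1{2k}\log\sum_j|s_{k,j}|^2$ associated to an $L^2(e^{-2k\varphi})$-orthonormal basis $\{s_{k,j}\}$ of $H^0(X,L^k)$; Demailly's regularization, via the Ohsawa--Takegoshi lower bound on the Bergman kernel, yields $\varphi_k\to\varphi$ with $\varphi_k\leq\varphi+c_k$ and $c_k\to0$. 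Extending each $s_{k,j}$ to $\tilde s_{k,j}\in H^0(V,L^k)$ and setting $\tilde\varphi_k=\tfrac1{2k}\log\sum_j|\tilde s_{k,j}|^2-c_k$ produces genuine elements of $\mathrm{PSH}(V,\omega)$ (any $\log$ of a sum of squared holomorphic sections is a positive weight) whose restriction to $X$ equals $\varphi_k-c_k\leq\varphi$, so each $\tilde\varphi_k$ is an admissible competitor. Since $\tilde\varphi_k|_X=\varphi_k-c_k\to\varphi$, we get $\Phi|_X\geq\varphi$ quasi-everywhere on $X$, and because pluripolar sets are negligible for the upper semicontinuous function $\varphi$ this propagates to all of $X$, giving $\Phi|_X=\varphi$.

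The delicate point — and the step I expect to demand the most care — is the uniform control in the limit: one must guarantee that the extended weights $\tilde\varphi_k$ stay $\omega$-psh on all of $V$ while their restrictions still converge to $\varphi$, so that the extension procedure creates no unavoidable loss along $X$. This is exactly where the growth-controlled extension of~\cite{CoGuZe2013} and the ampleness of $L$ — supplying both the Kähler representative $\omega$ and the section-theoretic surjectivity for $k\gg0$ — are indispensable; on a singular $X$ one would additionally route the Bergman construction through a resolution or work directly with the structure sheaf of $X$.
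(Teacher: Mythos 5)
First, a point of comparison: the paper does not prove this statement at all. Theorem~\ref{t.CGZ} is quoted verbatim from Coman--Guedj--Zeriahi~\cite{CoGuZe2013} and used as a black box, so there is no internal proof to measure yours against; it can only be judged on its own terms. On those terms, your envelope strategy has two genuine gaps, and they sit precisely at the two places you label ``automatic'' and ``the crux.''

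The first gap: the claim that $\Phi|_X\leq\varphi$ ``holds automatically'' is false as stated. Before regularization the supremum is indeed $\leq\varphi$ on $X$, but the upper semicontinuous regularization at a point $x\in X$ is a $\limsup$ over points of $V$ \emph{off} $X$, where the competitors are not constrained by $\varphi$. The set where $\sup_v v<(\sup_v v)^*$ is pluripolar in $V$ --- but $X$ is itself pluripolar in $V$, so this negligible set can contain all of $X$, and the regularized envelope can jump strictly above $\varphi$ along $X$. Controlling the envelope from above on the subvariety is essentially the entire difficulty of the extension problem, and the envelope formulation does not dispose of it. The second gap is the inequality $\varphi_k\leq\varphi+c_k$ with $c_k\to0$. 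Demailly's Bergman approximation gives the lower bound $\varphi_k\geq\varphi-C/k$ via Ohsawa--Takegoshi, but the upper bound it supplies is $\varphi_k(x)\leq\sup_{B(x,r)}\varphi+\tfrac{1}{k}\log(Cr^{-n})$; for a general singular (merely upper semicontinuous) weight this yields $\limsup_k\varphi_k\leq\varphi$ but no uniform additive error, so your shifted extensions $\tilde\varphi_k-c_k$ need not be admissible competitors, and without admissibility you do not even obtain $\Phi|_X\geq\varphi$ quasi-everywhere. (On top of this, running the Bergman/Ohsawa--Takegoshi machinery on a singular $X$ is itself a nontrivial matter, as you acknowledge but do not resolve.) The actual argument in~\cite{CoGuZe2013} avoids the global envelope for exactly these reasons: it proceeds via a local extension theorem with growth control near $X$ together with a gluing and limiting argument that reduces the singular metric to a sequence of better-behaved ones.
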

\begin{cor}\label{LeviProblem}
	Let $X$ be a projective algebraic manifold and $T$ be a positive closed current of bidegree $(1,1)$ on $X$. Assume that $[T]=c_1(L)$ for some positive line bundle $L$, and let $\varphi$ be a (singular) metric on $L$ with $\text{d}\text{d}^c\varphi=T$. Then $p_T(K)=\widehat{K}^T$ for all compact subsets $K\subset X$.
\end{cor}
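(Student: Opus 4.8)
The plan is to prove the two inclusions separately, observing that one is formal and reducing the other to the homogeneous case already settled by Guedj. The inclusion $\widehat{K}^T\subseteq p_T(K)$ is immediate: for every $\sigma\in\Gamma(X,L^k)$ the function $\frac1k\log|\sigma|e^{-k\varphi}$ lies in $\mathcal{C}_T(X)$ and, by the Poincar\'e--Lelong formula together with $\text{d}\text{d}^c\varphi=T$, satisfies $\text{d}\text{d}^c\big(\frac1k\log|\sigma|e^{-k\varphi}\big)=\frac1k[\sigma^{-1}(0)]-T\ge-T$; hence it is an admissible competitor in the definition of $\widehat{K}^T$, so that the defining inequalities of $\widehat{K}^T$ contain those of $p_T(K)$. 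The real content is thus the reverse inclusion $p_T(K)\subseteq\widehat{K}^T$, which is the direction that uses the positivity of $L$ and for which~\cite[Proposition 3.2]{Gu99} assumed $X$ homogeneous.

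To remove homogeneity I would argue by contraposition and transfer the problem to $\mathbb{CP}^N$. Since $L$ is positive it is ample, so a power $L^m$ is very ample and the Kodaira embedding presents $X$ as a submanifold of $V=\mathbb{CP}^N$ with $\mathcal{O}_V(1)|_X\cong L^m$; passing to $L^m$ only restricts the family of sections appearing in $p_T(K)$, so it suffices to produce a separating section for this smaller family, and I may assume $\mathcal{O}_V(1)|_X\cong L$. Now suppose $z_0\in X\setminus\widehat{K}^T$, and pick $f\in\mathcal{C}_T(X)$ with $\text{d}\text{d}^cf\ge-T$ and $f(z_0)>\sup_Kf$. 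The key point is that $\varphi+f$ is a positive (singular) metric of $L|_X$: its curvature $T+\text{d}\text{d}^cf$ is a positive current in the class $c_1(L)$, and $\exp(\varphi+f)$ is continuous because $f\in\mathcal{C}_T(X)$. By Theorem~\ref{t.CGZ} both $\varphi$ and $\varphi+f$ extend to positive (singular) metrics $\Phi_0$ and $\Psi$ of $\mathcal{O}_V(1)$ over $V$, with $\Phi_0|_X=\varphi$ and $\Psi|_X=\varphi+f$.

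Writing $\Theta:=\text{d}\text{d}^c\Phi_0$ and $F:=\Psi-\Phi_0$, I obtain a positive closed $(1,1)$-current $\Theta$ on $V$ with $[\Theta]=c_1(\mathcal{O}_V(1))$ and a function $F$ with $\text{d}\text{d}^cF\ge-\Theta$ and $F|_X=f$, so that $F(z_0)=f(z_0)>\sup_Kf=\sup_KF$ (the last equality since $K\subset X$). Thus $z_0$ lies outside the hull $\widehat{K}^{\Theta}$ computed on $V$. As $V=\mathbb{CP}^N$ is homogeneous and $\mathcal{O}_V(1)$ is positive, the equality $p_{\Theta}(K)=\widehat{K}^{\Theta}$ holds on $V$ by~\cite[Proposition 3.2]{Gu99}, so $z_0\notin p_{\Theta}(K)$ and there is a section $S\in\Gamma(V,\mathcal{O}_V(k))$ with $|S|e^{-k\Phi_0}(z_0)>\sup_K|S|e^{-k\Phi_0}$. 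Restricting, $\sigma:=S|_X\in\Gamma(X,L^k)$, and since $\Phi_0|_X=\varphi$ the two norms coincide at every point of $X$; therefore $|\sigma|e^{-k\varphi}(z_0)>\sup_K|\sigma|e^{-k\varphi}$, which shows $z_0\notin p_T(K)$ and completes the reverse inclusion.

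The step I expect to be the main obstacle is the regularity of the transferred competitor. To legitimately apply the hull $\widehat{K}^{\Theta}$ on $V$ one needs $F\in\mathcal{C}_{\Theta}(V)$, i.e. $\exp\Psi$ continuous on all of $V$, whereas Theorem~\ref{t.CGZ} only guarantees a positive, a priori singular, extension $\Psi$. Since $\varphi+f$ is already a \emph{continuous} positive metric on $X$, I expect this to be repairable by following the extension with a Demailly-type regularization on the projective manifold $V$: approximate $\Psi$ from above by continuous positive metrics $\Psi_j\downarrow\Psi$ that still restrict to $\varphi+f$ on $X$ up to $o(1)$, and use that the strict separation $f(z_0)>\sup_Kf$ survives small perturbations to run the argument with $F_j:=\Psi_j-\Phi_0$ for $j$ large. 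Checking that the regularization can be arranged continuous, positive, and compatible with restriction to $X$ is the crux.
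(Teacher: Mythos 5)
Your argument is essentially the paper's own proof: the easy inclusion via the observation that $\tfrac{1}{k}\log|\sigma|-\varphi$ is an admissible competitor for $\widehat{K}^T$, and the reverse inclusion by forming the continuous positive metric $\varphi+f$, embedding $X$ into $\mathbb{CP}^N$ by Kodaira, extending both metrics with Theorem~\ref{t.CGZ}, invoking the homogeneous case of~\cite[Proposition 3.2]{Gu99} on $\mathbb{CP}^N$, and restricting the resulting section of $\mathcal{O}(M)$ back to $X$. The regularity point you single out as the crux --- that the extended metric $\Psi$ is only guaranteed to be positive and singular, so $F=\Psi-\Phi_0$ is not obviously in $\mathcal{C}_{\Theta}(\mathbb{CP}^N)$ --- is equally present in the paper's proof, which simply asserts $z\notin\widehat{K}^{\widetilde{T}}$ after extension, so your proposed repair (regularizing on $\mathbb{CP}^N$, or noting that the homogeneous-case argument only needs a quasi-plurisubharmonic competitor) is a reasonable addition rather than a deviation.
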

\begin{proof}
	The inclusion $\widehat{K}^T\subseteq p_T(K)$ is straightforward. Indeed, if $z\not\in p_T(K)$, then there exists a $k$ and a $\sigma\in\Gamma(X,L^k)$ with $|\sigma|e^{-k\varphi}(z)>\sup_K|\sigma|e^{-k\varphi}$. Therefore
\[
	\frac{1}{k}\log |\sigma(z)|-\varphi(z)>\sup_K\left(\frac{1}{k}\log|\sigma|-\varphi\right)
\]
	and so $z\not\in\widehat{K}^T$ since $\tfrac{1}{k}\log|\sigma|(z)-\varphi(z)$ is a member of $\mathcal{C}_T(X)$.

	For the reverse inclusion, fix $z\not\in\widehat{K}^T$. In view of the Kodaira embedding theorem we can assume $X\subset\mathbb{CP}^N$ with $\mathcal{O}(1)|_X=L^k$ for some $k$. Then there exists a positive metric $\psi$ of $L$ on $X$ with $e^{\psi}$ continuous and $(\psi-\varphi)(z)>\sup_K(\psi-\varphi)$~\cite[Proposition 3.2]{Gu99}. Extend $k\psi$ and $k\varphi$ to metrics $\widetilde\psi$ and $\widetilde\varphi$, respectively, on $\mathbb{CP}^N$ by Theorem~\ref{t.CGZ} and set $\widetilde T:=\text{d}\text{d}^c\widetilde\varphi$. Now $z\not\in\widehat{K}^{\widetilde{T}}$, and so we have $z\not\in p_{\widetilde{T}}(K)$, since $\mathbb{CP}^N$ is homogeneous. Thus there exist an integer $M$ and a section $s\in\Gamma(\mathbb{CP}^N,\mathcal{O}(M))$ with
\[
	|s|e^{-M\widetilde\varphi}(z)>\sup_K|s|e^{-M\widetilde\varphi}.
\]
	The restriction of $s$ to $X$ yields a section of $L^{Mk}$ satisfying the above inequality. We conclude that $z\not\in p_T(K)$.
\end{proof}

The proof Theorem~\ref{OkaWeil} requires a result following from the work of Guedj, which will be included for completeness. Owing to its technical nature, the proof will be postponed to the next section.
\begin{thm}[c.f.~{\cite[Theorem 4.1]{Gu99}}]\label{Guedjthm}
	Let $T$ be a positive closed current of bidegree $(1,1)$ satisfying condition (C) on a projective algebraic manifold $X$. Assume $[T]=c_1(L)$ for some positive holomorphic line bundle. Then there exist $N_j\in\mathbb{N}$ and $s_j\in\Gamma(X,L^{N_j})$, $j\in\mathbb{N}$, so that
\begin{enumerate}
	\item[(i)] $\frac{1}{N_j}\log |s_j|-\varphi\xrightarrow{j\to\infty}0$ uniformly on compact subsets of $X\setminus\text{Supp }T$, where $\varphi$ is a metric of $L$ with $\text{d}\text{d}^c\varphi= T$;
	\item[(ii)] $T_j=\frac{1}{N_j}\left[s_j^{-1}(0)\right]\xrightarrow{j\to\infty} T$ in the weak sense of currents;
	\item[(iii)] $s_j^{-1}(0)\xrightarrow{j\to\infty}\text{Supp }T$ in the Hausdorff metric;
	\item[(iv)] $\nu\left(\frac{1}{N_j}\left[s^{-1}_j(0)\right],z\right)\xrightarrow{j\to\infty}\nu(T,z)$ for any $z\in X$.
\end{enumerate}
\end{thm}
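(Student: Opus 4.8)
The plan is to realize $T$ as the curvature of a singular weight and to approximate that weight by normalized logarithms of single global sections produced by the $L^2$/Bergman machinery; the four conclusions then split into a ``soft'' equidistribution part and a ``hard'' uniformity part, the latter being where condition (C) is indispensable. Fix the singular metric $\varphi$ on $L$ with $\text{d}\text{d}^c\varphi=T$ provided by \cite{Sh95}, and for each $m$ let $\mathcal{H}_m$ be the Hilbert space of sections $s\in\Gamma(X,L^m)$ with $\int_X|s|^2e^{-2m\varphi}\,\text{d}V<\infty$, with orthonormal basis $(g_{m,k})$ and Bergman weight $\psi_m=\frac{1}{2m}\log\sum_k|g_{m,k}|^2$. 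Using the positivity of $L$ one controls $\psi_m$ on both sides: the Ohsawa--Takegoshi extension theorem produces at each point a section with prescribed value and controlled $L^2$-norm, giving the global lower bound $\psi_m\geq\varphi-C/m$, while the sub-mean value inequality applied to the plurisubharmonic functions $|s|^2$ gives $\psi_m(z)\leq\sup_{B(z,r)}\varphi+\frac{1}{2m}\log(C/r^{2n})$. Since $\text{d}\text{d}^c\varphi=T=0$ off $\text{Supp}\,T$, there $\varphi$ is pluriharmonic, hence continuous, and the two bounds pinch $\psi_m\to\varphi$ locally uniformly on $X\setminus\text{Supp}\,T$. This is the specialization of Demailly's analytic regularization to our setting, and it also yields the Lelong-number comparison $\nu(\varphi,z)-n/m\leq\nu(\psi_m,z)\leq\nu(\varphi,z)$.

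Next I would pass from the Bergman sum to single sections $s_m$, setting $N_j=m_j$ along a suitable subsequence. Choosing $s_m$ in a generic (equivalently, measure-theoretically typical) manner inside $\mathcal{H}_m$, the expected-zero-current identity $\mathbb{E}[\frac1m[s_m^{-1}(0)]]=\text{d}\text{d}^c\psi_m$ together with variance estimates (in the spirit of Shiffman--Zelditch) gives $\frac1m\log|s_m|\to\varphi$ in $L^1(X)$ and, by continuity of $\text{d}\text{d}^c$ on currents, the weak convergence $\frac1m[s_m^{-1}(0)]\to T$. This already delivers conclusion (ii) and, since the trace measure of $T$ charges every neighbourhood of every point of $\text{Supp}\,T$, the ``density'' half of (iii): no neighbourhood of a point of $\text{Supp}\,T$ can remain free of zeros. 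Note that this step uses only the positivity of $L$, not condition (C).

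The heart of the argument—and the main obstacle—is to upgrade the $L^1$/weak convergence to the locally uniform statement (i) on $X\setminus\text{Supp}\,T$, equivalently to show that the zeros of $s_m$ do not escape from $\text{Supp}\,T$. This is exactly where condition (C) enters: it guarantees (cf.\ the Steinness of $X\setminus\text{Supp}\,T$ established in \cite{Gu99}) that the relevant $T$-extremal function
\[
	u_{K,T}(z)=\sup\left\{u(z)\,:\,u\in\mathcal{C}_T(X),\ \text{d}\text{d}^cu\geq -T,\ u\leq 0\text{ on }K\right\}
\]
of a compact $K\subset\subset X\setminus\text{Supp}\,T$ is locally bounded, with $\{u_{K,T}^*\leq 0\}=\widehat{K}^T\subset\subset X\setminus\text{Supp}\,T$. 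A Bernstein--Walsh type estimate then bounds $\varphi-\frac1m\log|s_m|$ on $K$ in terms of $u_{K,T}$ and the (vanishing) defect of the $L^1$ approximation, converting the soft convergence into a uniform rate and simultaneously confining the zeros to shrinking neighbourhoods of $\text{Supp}\,T$. Granting this, the functions $\frac1m\log|s_m|$ are pluriharmonic near $K$ for large $m$, are uniformly bounded above by $\psi_m\to\varphi$, and converge in $L^1$; Harnack's inequality for the resulting harmonic functions then forces the locally uniform convergence (i).

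Finally, (iii) follows by combining the density half above with the no-escape half just obtained, and (iv) follows from the Lelong-number comparison for $\psi_m$ together with the upper semicontinuity of Lelong numbers under the strengthened convergence, the genericity of $s_m$ ensuring that the vanishing orders along $\text{Supp}\,T$ match those of $T$ in the limit. As an alternative to the direct construction one can, after a Kodaira embedding $X\subset\mathbb{CP}^N$ with $\mathcal{O}(1)|_X=L^k$, extend $k\varphi$ to a positive metric on $\mathcal{O}(1)$ by Theorem~\ref{t.CGZ} and invoke the homogeneous case \cite[Theorem 4.1]{Gu99} on $\mathbb{CP}^N$ before restricting to $X$, exactly as in the proof of Corollary~\ref{LeviProblem}; the obstacle there is to arrange the extension so that the singular support of the extended current meets $X$ in precisely $\text{Supp}\,T$, so that the convergence survives restriction.
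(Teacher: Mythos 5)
Your outline correctly isolates where the difficulty lies, but it does not close it. The paper does not use the Bergman-kernel/equidistribution route at all: it constructs the sections $s_j$ by hand, gluing local peak sections $e^{NP_j}$ at a dense set of points of $\text{Supp }T$ (using the strict positivity of the approximating metric $\psi_n$ there, supplied by Proposition~\ref{Guedjprop}) to a power $h^N$ of a section peaking on the $T$-polynomially convex compact $K_n$, and then correcting the resulting smooth section $u$ by solving $\bar\partial v=\bar\partial u$ with H\"ormander--Demailly $L^2$-estimates and a H\"ormander--Wermer sup-norm estimate. The payoff of that construction is the explicit two-sided bound $\tfrac{n-2}{n}\leq|S_n|e^{-N\psi_n}\leq 1$ on $K_n\cup\{a_1,\dots,a_n\}$, i.e.\ a \emph{lower} bound for $|S_n|$ on $K_n$, which is exactly what forces the zeros to stay away from compact subsets of $X\setminus\text{Supp }T$ and yields (i) and (iii).

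That lower bound is the step your proposal leaves open. $L^1$ convergence of $\tfrac1m\log|s_m|$ to $\varphi$ (which is what generic/random choice in $\mathcal H_m$ and Shiffman--Zelditch variance estimates actually deliver) is fully compatible with $s_m$ having zeros inside $K$ for every $m$: a plurisubharmonic function can converge in $L^1$ while taking the value $-\infty$ on a nonempty analytic set meeting $K$, since that set has measure zero. If that happens, (i) and (iii) fail outright. Your ``Bernstein--Walsh type estimate'' is invoked to rule this out, but Bernstein--Walsh inequalities bound sections from \emph{above} off $K$ in terms of their sup on $K$ and the extremal function; they give no lower bound for $|s_m|$ on $K$ and hence no obstruction to zeros entering $K$. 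The sentence ``Granting this, the functions $\tfrac1m\log|s_m|$ are pluriharmonic near $K$'' assumes precisely the conclusion at issue. (Your fallback via Kodaira embedding and Theorem~\ref{t.CGZ} has the defect you yourself flag: the extended current's support can meet $X$ in a set much larger than $\text{Supp }T$, so the homogeneous case on $\mathbb{CP}^N$ does not restrict to the desired statement.) The softer parts of your sketch are essentially sound --- the weak convergence (ii) from $L^1$ convergence of potentials, and the Lelong-number lower bounds coming from the $L^2$ integrability condition (the paper uses the same mechanism, citing Kiselman, together with Siu decomposition and a cohomological argument to identify the limit current) --- but without a genuine replacement for the gluing-plus-$\bar\partial$ construction the central conclusions (i) and (iii) are not proved.
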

\begin{proof}[Proof of Theorem~\ref{OkaWeil}]
	Let $U\subset\subset X\setminus\text{Supp }T$ be a neighbourhood of $K$ on which $f$ is defined. For each $a\in \text{b}U\setminus K$, Corollary~\ref{LeviProblem} provides a positive integer $k$ and a $u\in\Gamma(X,L^k)$ so that
\[
	|u|e^{-k\varphi}(a)>\sup_K|u|e^{-k\varphi}.
\]
	Let $s_j$ be the sections granted by Theorem~\ref{Guedjthm}, and set $T_j=\frac{1}{N_j}[s^{-1}_j(0)]$. By (i), we have
\[
	|u|e^{-\frac{k}{N_j}\log|s_j|}(a)>\sup_K|u|e^{-\frac{k}{N_j}\log|s_j|}
\]
and hence
\[
	\left|\frac{u^{N_j}}{s_j^k}\right|(a)>\sup_K\left|\frac{u^{N_j}}{s^k_j}\right|
\]
	for large $j$. This inequality holds in a neighbourhood of $K$, so there is a neighbourhood of $a$ that is disjoint from $\widehat K^{T_j}$. By compactness of b$U$ we can repeat this process finitely many times to see that $\widehat{K}^{T_j}\subset\subset U$ for large $j$.

	The Kodaira embedding theorem then shows that for every sufficiently large $j$ there exists a sequence of meromorphic functions $\{h_{\ell}/s_j^{P_{j,\ell}}\}_{\ell=1}^{\infty}$ approximating $f$ uniformly on $K$~\cite[Theorem 3.10]{Gu99}. Here $P_{j,\ell}\in\mathbb{N}$ and $h_{\ell}\in\Gamma(X,L^{P_{j,\ell}})$. A diagonalization argument then shows that for every $\varepsilon>0$ there exists a $j_0$ so that
\[
	\sup_K\left(\frac{h_{j}}{s_j^{P_{j,j}}}-f\right)<\varepsilon
\]
whenever $j\geq j_0$.
\end{proof}

The proof of Theorem~\ref{HoloTPoly} follows from Theorem~\ref{OkaWeil}.
\begin{proof}[Proof of Theorem~\ref{HoloTPoly}]
	Let $K$ be a compact subset of $X\setminus\text{Supp }T$.

	First suppose $z\not\in\widehat{K}^T$. Then there exists a $f\in\mathcal{C}_T(X)$ with $\text{d}\text{d}^cf\geq -T$ and
	\begin{equation}\label{pshineq}
	f(z)>\sup_Kf.
\end{equation}
	If $z\in\text{Supp }T$, then clearly $z\not\in\widehat{K}_{X\setminus\text{Supp T}}$, so we can assume that $z\not\in\text{Supp }T$. Note that $f$ is plurisubharmonic on $X\setminus\text{Supp }T$, so the inequality $\eqref{pshineq}$ indicates that $z$ does not belong to the convex hull of $K$ with respect to plurisubharmonic functions on $X\setminus\text{Supp }T$. Since $X\setminus\text{Supp }T$ is Stein~\cite[Theorem 0.4]{Gu99}, this hull coincides with $\widehat{K}_{X\setminus\text{Supp }T}$ (e.g. see Forstneri\v{c}~\cite[Corollary 2.5.3]{Fo17}). Therefore we have $\widehat{K}_{X\setminus\text{Supp }T}\subseteq\widehat{K}^T$.

	For the reverse inclusion, suppose $z\not\in\widehat{K}_{X\setminus\text{Supp }T}$. Condition (C) is satisfied by $T$, so $z\not\in\widehat{K}^T$ whenever $z\in\text{Supp }T$ and we can thus assume $z\not\in\text{Supp }T$. Then there exists a $f\in\mathcal{O}(X\setminus\text{Supp }T)$ with
\begin{equation}\label{holoinq}
|f(z)|>\sup_K|f|.
\end{equation}
	Again, by condition (C), the $T$-polynomially convex hull of $K\cup\{z\}$ is contained in $X\setminus\text{Supp }T$, and an application of Theorem~\ref{OkaWeil} yields sequences $u_j,v_j\in\Gamma(X,L^{N_j})$, $N_j\in\mathbb{N}$, so that the meromorphic functions $\{u_j/v_j\}_{j=1}^{\infty}$ approximate $f$ uniformly on $K\cup\{z\}$ and $v_j$ satisfies conditions (i)-(iv) of the theorem. Since inequality~\eqref{holoinq} is strict, we have $|u_j/v_j|(z)>\sup_K|u_j/v_j|$, or
\[
	|u_j|e^{-\log|v_j|}(z)>\sup_K\left(|u_j|e^{-\log|v_j|}\right)
\]
for large $j$. Furthermore, condition (i) of Theorem~\ref{OkaWeil} asserts that $\frac{1}{N_j}\log|v_j|-\varphi\xrightarrow{j\to\infty}0$ uniformly on $K\cup\{z\}$, which in turn implies
\[
	|u_j|e^{-N_j\varphi}(z)>\sup_K\left(|u_j|e^{-N_j\varphi}\right)
\]
for large $j$.
\end{proof}
\section{Proof of Theorem~\ref{Guedjthm}}

We require the following proposition. Here $E_c(T)=\{z\in X\,:\nu(T,z)\geq c\}$ and $E^+(T)=\{z\in X\,:\,\nu(T,z)\geq 0\}$.
\begin{prop}[c.f. {\cite[Proposition 4.2]{Gu99}}]\label{Guedjprop}
	Let $T$ be a positive closed current of bidegree $(1,1)$ satisfying condition (C) on a projective algebraic manifold $X$ such that $[T]=c_1(L)$ for some positive holomorphic line bundle $L$. Suppose $\varphi$ is a (singular) metric on $L$ with $\text{d}\text{d}^c\varphi=T$, $K$ is a compact subset of $X\setminus\text{Supp T}$ with $\widehat{K}^T=K$, and fix a K\"ahler form $\omega$ on $X$. Then for every open set $V$ with $K\subset V\subset\subset X\setminus\text{Supp }T$ and every $\delta>0$, we can find $M\in\mathbb{N}$ and construct a positive (singular) metric $\psi$ of $L^M$ on $X$ and a section $h\in\Gamma(V,L^M)$ such that
\begin{enumerate}
	\item[(i)] $K\subset\{a\in V\,:\,|h|_{\psi}\geq 1\}=\{a\in V\,:\,|h|_{\psi}\equiv 1\}\subset\subset V$
	\item[(ii)] $\left\|\frac{\psi}{M}-\varphi\right\|_{L^{\infty}(\overline{V})}\leq\delta$ and $\left\|\frac{\psi}{M}-\varphi\right\|_{L^1(X)}\leq\delta$,
	\item[(iii)] $\sup_X\left|\nu\left(\frac{1}{M}\text{d}\text{d}^c\left(\psi\right),\,\cdot\,\right)-\nu(\text{d}\text{d}^c\varphi,\,\cdot\,)\right|\leq\delta$,
	\item[(iv)] $\text{d}\text{d}^c\psi\geq\varepsilon\omega$ in a neighbourhood of $\text{Supp }T$ for some constant $\varepsilon>0$,
	\item[(v)] $\psi$ is continuous in $X\setminus\text{Supp }T$ and smooth on a dense subset of $X\setminus E_{c_0}(T)$ for some $c_0>0$ which can be made arbitrarily small.
\end{enumerate}
\end{prop}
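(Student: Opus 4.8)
The plan is to build $\psi$ in two stages: first regularize $\varphi$ by a Bergman-type metric attached to a high power of $L$, engineered to be strictly positive, and then modify it on $V$ by a peak section to force the normalization in (i). Throughout I would fix a smooth metric $\varphi_0$ of $L$ with $\text{d}\text{d}^c\varphi_0=\omega_0>0$ (available since $L$ is positive), and use that $K=\widehat K^T=p_T(K)$ by Corollary~\ref{LeviProblem}, so that $K$ is separated from points outside it by sections of powers of $L$; condition (C) guarantees that all of the separating data and the peak locus below remain inside $X\setminus\text{Supp }T$.

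The first step is to trade a little of the singular metric for curvature: for small $\eta>0$ set $\varphi_\eta=(1-\eta)\varphi+\eta\varphi_0$, so that $\text{d}\text{d}^c\varphi_\eta\geq\eta\omega_0$, while $\varphi_\eta-\varphi=\eta(\varphi_0-\varphi)$ is small in $L^\infty(\overline V)$ and $L^1(X)$ and $\nu(\text{d}\text{d}^c\varphi_\eta,\cdot)=(1-\eta)\nu(T,\cdot)$. For $M$ large I would then take $\psi_M=\tfrac12\log\sum_j|\sigma_j|^2$, where $\{\sigma_j\}$ is an orthonormal basis of the sections of $L^M$ that are square integrable against $e^{-2M\varphi_\eta}$; this is the Demailly--Bergman regularization underlying Guedj's argument~\cite{Gu99}. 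Its standard properties yield (ii)--(v) for $\psi_M$: since $\varphi_\eta$ is continuous on $\overline V\subset\subset X\setminus\text{Supp }T$, Bergman asymptotics give $\tfrac1M\psi_M\to\varphi_\eta$ uniformly on $\overline V$ and in $L^1(X)$, whence (ii) after first fixing $\eta$ small and then $M$ large; the curvature bound $\text{d}\text{d}^c\psi_M\geq M\,\text{d}\text{d}^c\varphi_\eta-C\omega\geq M\eta\omega_0-C\omega$ gives $\tfrac1M\text{d}\text{d}^c\psi_M\geq\varepsilon\omega$ for large $M$, hence (iv) (in fact globally); the Lelong comparison $\nu(\varphi_\eta,\cdot)-\tfrac nM\leq\nu(\tfrac1M\text{d}\text{d}^c\psi_M,\cdot)\leq\nu(\varphi_\eta,\cdot)$ together with the boundedness of $\nu(T,\cdot)$ gives (iii); and, having analytic singularities, $\psi_M$ is continuous off the common zero locus of the $\sigma_j$ and smooth off it, this locus meeting $X\setminus\text{Supp }T$ only inside $E_{c_0}(T)$ with $c_0\to 0$ as $M\to\infty$, which is (v).

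The second step is to correct $\psi_M$ on $V$ so as to produce $h$ with property (i). The key structural point is that, because $\text{d}\text{d}^c\psi_M\geq\varepsilon\omega$ and $T=0$ on $V$, the function $\psi_M-M\varphi$ is strictly plurisubharmonic on $V$, hence attains its maximum on $\text{b}V$; this boundary dominance is precisely what lets the superharmonic function $\log|h|-\psi_M$ exceed $0$ on an interior set while dropping below $0$ near $\text{b}V$. Using $K=p_T(K)$ and the separating sections of Corollary~\ref{LeviProblem} at points of $\text{b}V$, together with the strict positivity just obtained, I would construct a single $h\in\Gamma(V,L^M)$, suitably normalized, for which the peak locus $\{\log|h|\geq\psi_M\}$ contains a neighbourhood of $K$ and is contained in $V$, while $\tfrac1M\log|h|-\varphi\leq\delta$ on $\overline V$. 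Setting $\psi=\max(\psi_M,\log|h|)$ on $V$ (as a regularized maximum, to retain smoothness) and $\psi=\psi_M$ on $X\setminus V$, the two definitions agree near $\text{b}V$, and $|h|_\psi=1$ exactly on $\{\log|h|\geq\psi_M\}$, giving (i). Since the modification is confined to $V$, where Lelong numbers are negligible and $\psi=\psi_M$ near both $\text{b}V$ and $\text{Supp }T$, properties (ii)--(v) survive unchanged.

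The hard part will be the construction of $h$ in the second step: producing one holomorphic section on $V$ whose peak locus in the metric $\psi_M$ simultaneously captures all of $K$ and retreats strictly inside $V$, with the normalization compatible with (ii). This is exactly where the $T$-convexity hypothesis $\widehat K^T=K$ (through Corollary~\ref{LeviProblem} and condition (C)) and the strict positivity $\text{d}\text{d}^c\psi_M\geq\varepsilon\omega$ must be used in tandem, and where the regularized maximum has to be controlled so as not to disturb the uniform, $L^1$, Lelong, and positivity estimates. A secondary technical point is to make the Demailly--Bergman estimates of the first step precise in the singular-weight setting, in particular the uniform convergence on $\overline V$ and the confinement of the common zero locus to $E_{c_0}(T)$.
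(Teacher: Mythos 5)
A preliminary remark: the paper itself gives no proof of Proposition~\ref{Guedjprop}; it is quoted (``c.f.''~\cite[Proposition 4.2]{Gu99}) and used as a black box in the proof of Theorem~\ref{Guedjthm}, so there is no in-paper argument to compare yours against. Judged on its own terms, your first stage (Demailly--Bergman regularization of $\varphi_\eta=(1-\eta)\varphi+\eta\varphi_0$) is a reasonable route to (ii)--(v) and is in the spirit of Guedj's construction, modulo the usual care with singular weights.

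The genuine gap is in the second stage, which is where all of the content of (i) lives and which you explicitly defer as ``the hard part.'' The difficulty is not merely technical: the division of labour you propose cannot work as stated. Your $\psi_M$ satisfies $\tfrac1M\psi_M\to\varphi_\eta$ uniformly on $\overline V$, so up to the fixed strictly psh function $\eta(\varphi_0-\varphi)$ it carries no information distinguishing $K$ from $\mathrm{b}V$. Your ``key structural point'' --- that strict positivity of $\mathrm{d}\mathrm{d}^c\psi_M$ makes $\log|h|-\psi_M$ superharmonic and hence lets it peak on $K$ while dropping below $0$ near $\mathrm{b}V$ --- proves too little: superharmonicity gives a \emph{minimum} principle, which is consistent with an interior peak but provides no mechanism forcing the peak set $\{\log|h|\geq\psi_M\}$ to retreat off $\mathrm{b}V$; and since $\log|h|-M\varphi$ is psh on $V$ (because $MT=0$ there), $|h|e^{-M\varphi}$ obeys the maximum principle and cannot genuinely peak on an interior compact set relative to the flat weight. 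The only available mechanism for confining the peak set is the hypothesis $\widehat K^T=K$: via Corollary~\ref{LeviProblem} one obtains finitely many sections $\sigma_i\in\Gamma(X,L^{k_i})$ with $|\sigma_i|e^{-k_i\varphi}\leq 1$ on $K$ and $\max_i|\sigma_i|^{1/k_i}e^{-\varphi}>1$ on $\mathrm{b}V$, and these must be built into the \emph{metric}, e.g.\ $\psi=\max\bigl((1-\eta)M\varphi+\eta M\varphi_0+c,\ \tfrac{M}{k_1}\log|\sigma_1|,\ \ldots\bigr)$ with $M$ a common multiple of the $k_i$, after which $h$ can be taken to be a section of $L^M|_V$ with $|h|e^{-M\varphi}$ constant (available because $\mathrm{d}\mathrm{d}^c(M\varphi)=0$ on $V$ makes the metric flat there, modulo a monodromy point handled by passing to a further power or shrinking $V$). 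Your sketch mentions the separating sections but never inserts them into any formula; that is exactly the missing idea. Two smaller points: the regularized maximum would destroy the exact equality set demanded in (i) --- the honest maximum is continuous and psh, which is all that (v) requires --- and your parenthetical justification of (v) (the base locus ``meeting $X\setminus\mathrm{Supp}\,T$ only inside $E_{c_0}(T)$'') is vacuous as written, since $E_{c_0}(T)\subset\mathrm{Supp}\,T$.
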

\begin{proof}[Proof of Theorem~\ref{Guedjthm}]
Since $T$ satisfies condition (C), we can find a sequence $\{K_n\}_{n=1}^{\infty}$ of $T$-polynomially convex compact subsets of $X\setminus\text{Supp }T$ which exhaust $X\setminus\text{Supp }T$. Fix a sequence $\{\delta_n\}_{n=1}^{\infty}$ of positive numbers tending to zero, and open neighbourhoods $V_n\subset\subset X\setminus\text{Supp }T$ of $K_n$. Via Proposition~\ref{Guedjprop} we construct $M_n$, positive metrics $\psi_n$ of $L^{M_n}$ and holomorphic sections $h_n$ of $L^{M_n}$ in $V_n$ with the properties (i)--(v).

	Fix a sequence of points $\{a_j\}_{j=1}^{\infty}$, dense in $\text{Supp }T$, such that for all $n\in\mathbb{N}$ we have $a_1,\ldots,a_n\in\text{Supp }T\setminus E_{c_n}(T)$, where $\{c_n\}_{n=1}^{\infty}$ is a sequence of positive numbers converging to zero with $\psi_n$ smooth and $\text{d}\text{d}^c\psi_n>0$ at the points $a_1,\ldots,a_n$. Further, let $\{F_n\}_{n=1}^{\infty}$ be a sequence of compact subsets of $X\setminus E^{+}(T)$ with $\bigcup_{j=1}^{n}F_n=X\setminus E^{+}(T)$ and $K_n\cup\{a_1,\ldots,a_n\}\subset F_n\subset\subset X\setminus E_{c_n}(T)$.

	For the following we will treat $n$ as fixed and consequently omit subscripts of `$n$' to simplify notation. Fix an open covering $\{U_\alpha\}_{\alpha\in A}$ of $X$ by trivializations of $L$ fine enough so that for each $a_j$ there is a $\alpha_j\in A$ so that $a_j\in U_{\alpha_j}$ and $U_{\alpha_j}$ contains no other elements of the discrete set $\{a_1,\ldots,a_n\}$.

	Since $\text{d}\text{d}^c\psi>0$ on $\text{Supp }T$, there are holomorphic polynomials $P_j$ so that $\text{Re }(P_j)(a_j)=\psi_{\alpha_j}(a_j)$ and $\psi_{\alpha_j}(z)-\text{Re }(P_j)(z)\geq c_j|z-a_j|^2$ for some $c_j>0$ in a local coordinate patch $W_j$ of $a_j$ with $W_j\subset U_{\alpha_j}$; we can further choose the $W_j$ small enough so that $W_j\cap U_{\beta}=\varnothing$ for all $\beta\in A\setminus \{\alpha_j\}$.

	Let $\chi_j\in\mathcal{C}_0^{\infty}(W_j)$, $0\leq\chi_j\leq 1$, be a bump function that is identically equal to one in a neighbourhood of $a_j$ and define smooth sections $f_j=\{f_j^{\alpha}\}_{\alpha\in A}$ of $L^{NM}$ by
\[
f_j^{\alpha}=\begin{cases}
	\chi_j e^{NP_j}, &\text{whenever }\alpha=\alpha_j\\
	0, &\text{whenever }\alpha\neq\alpha_j
\end{cases},
\]
	where $N$ is a large constant that will be chosen momentarily. Let $\xi:X\to [0,1]$ be another smooth bump function which is identically equal to one on a neighbourhood of $K'=\left\{z\in X\,:\,|h|e^{-\psi}(z)\geq 1\right\}$ and has support disjoint from $\text{Supp }T$ and the supports of $\chi_1,\ldots,\chi_n$.

	Define $u=\xi h^N+\sum_{j=1}^{n}f_j$. This is a smooth global section of $L^{NM}$ on $X$ with the properties:
\begin{align}
	|u|_{N\psi}=1\quad\quad&\text{on}\quad K\cup\{a_1,\ldots,a_n\}\label{prop1}, \text{ and}\\
	|u|_{N\psi}\leq 1\quad\quad &\text{on $X$ and strict outside a neighbourhood of } K\cup\{a_1,\ldots, a_n\}\label{prop2}
\end{align}
The smooth $\bar\partial$-closed form $\bar\partial u$ is a $(0,1)$-form with values in $L^{NM}$, or alternatively, a smooth $\bar\partial$-closed $(\dim X,1)$-form with values in $L^{NM}\otimes K^*_X$, where $K_X$ denotes the canonical bundle on $X$.

	Set $N=N_1+N_2$, where $N_2$ is chosen so that $L^{N_2M}\otimes K^*_X$ is positive, and $N_1$ is a large positive integer to be determined soon. Fix a K\"ahler form $\omega$ on $X$, $\varepsilon>0$, and a smooth metric $G$ of $L^{N_2M}\otimes K^*_X$ with $\text{d}\text{d}^cG\geq\varepsilon\omega$. We solve a $\bar\partial$-problem with $L^2$-estimates associated to the metric $\theta=N_1\psi+G$~\cite[Theorem 3.1]{De92b} to find a smooth $(\dim X,0)$-form $v$ with values in $L^{NM}\otimes K^*_X$ satisfying $\bar\partial v=\bar\partial u$ and
\begin{equation*}
	\int_X|v|^2e^{-2\theta}\text{d}V_{\omega}\leq\frac{1}{\varepsilon}\int_X|\bar\partial u|^2e^{-2\theta}\text{d}V_{\omega},
\end{equation*}
	where $\text{d}V_{\omega}$ denotes the K\"ahler volume element $\frac{1}{\dim(X)!}\omega^{\text{dim}(X)}$. Since $\text{Supp }(\bar\partial\xi)\subset\{z\in X\,:\, |h|_{\psi}(z)<1\}$ and $\text{Supp }(\bar\partial\chi_j)\subset\left\{z\in W_j\,:\,|e^{MP_j}|_{-\psi}<1\right\}$, we can find a number $a<1$ so that $|\bar\partial u|^2e^{-2N\psi}\lesssim a^{2N_1}$, where the implied constant is independent of $N_1$. Then
\begin{equation}\label{uniformbound}
	\int_X|v|^2e^{-2\theta}\text{d}V_\omega\lesssim a^{2N_1}.
\end{equation}
	Now since $\psi$ is continuous on $X\setminus E_{c}(T)$, it is uniformly continuous on a continuous on a compact neighbourhood of $F$ which is compact in $X\setminus E_c(T)$. For a fixed $z\in F$, choose an $r>0$ small enough so that $e^{\eta}a<1$, where $\eta=\sup_{w\in B(z,r)}|\psi(w)-\psi(z)|$ represents the uniform oscillation of $\psi$ on $B(z,r)$. A lemma of H\"ormander--Wermer~\cite[Lemma 4.4]{HoWe1968} yields
\begin{align}
	|v(z)|^2&\lesssim r^2\sup_{w\in B(z,r)}|\bar\partial v(w)|^2+r^{-2\dim(X)}\|v\|^2_{L^2(B(z,r))}\nonumber\\
	&=r^2\sup_{w\in B(z,r)}\left(e^{2N\psi(w)}|\bar\partial v(w)|^2e^{-2N\psi(w)}\right)+r^{-2\dim(X)}\|v\|^2_{L^2(B(z,r))}\nonumber\\
	&\leq r^2 e^{2N\psi(z)}e^{2N\eta}\sup_{w\in B(z,r)}\left(|\bar \partial v(w)|^2e^{-2N\psi(w)}\right)+r^{-2\dim(X)}\|v\|^2_{L^2(B(z,r))}\nonumber\\
	&\lesssim e^{2N\psi(z)}e^{2N_1\eta}\left(\sup_{B(z,r)}\left(|\bar\partial v|^2e^{-2N\psi}\right)+\int_{B(z,r)}|v|^2e^{-2\theta}\text{d}V_{\omega}\right)\nonumber\\
	&\lesssim (e^{\eta}a)^{2N_1}e^{2N\psi(z)}\label{largeestimate},
\end{align}
	where again the implied constants are independent of $N_1$. Here $B(z,r)$ stands for the pullback of a Euclidean ball via a coordinate chart.

	Choose $r$ so that $e^{\eta}a<1$ and $N_1$ so that $|v|e^{-N\psi}\leq\tfrac{1}{n-1}$ on $F_n$. Set $S=\tfrac{n-1}{n}(u-v)$. Then
\begin{align}
	|S|e^{-N\psi}&\leq\frac{n-1}{n} \left|u\right|e^{-N\psi}+\frac{n-1}{n} \left|v\right|e^{-N\psi}\leq 1\quad\text{on $F_n$}\label{prop3}
\shortintertext{and likewise}
	|S|e^{-N\psi}&\geq\frac{n-1}{n}\left|u\right|e^{-N\psi}-\frac{n-1}{n} \left|v\right|e^{-N\psi}\geq\frac{n-2}{n}\quad\text{on $K_n\cup\{a_1,\ldots,a_n\}$}\label{prop4}
\end{align}
	by \eqref{prop1} and \eqref{prop2} above.

	Given $\varepsilon>0$ and a compact subset $A\subset X\setminus\text{Supp }T$, by \eqref{prop3} and \eqref{prop4} we have $\smash{\|\tfrac{1}{N_n}\log|S_n|-\psi_n\|_{L^{\infty}(A)}<\varepsilon/2}$. By choice of $n$ large enough so that both $\delta_n$ is smaller than $\varepsilon/2$ and $A\subset K_n\cap F_n$, one also has $\smash{\|\tfrac{1}{M_n}\psi-\varphi\|_{L^{\infty}(A)}<\varepsilon/2}$ by property (ii) of Proposition~\ref{Guedjprop}. The triangle inequality then shows conclusion (i).

	For conclusions (ii)--(v), we will show
	\begin{align}
		\nu\left(\frac{1}{N_n}\text{d}\text{d}^c\left(\log|S_n|\right),z\right)&\geq\left(1-\frac{1}{\sqrt{N_n}}\right)\nu\left(\text{d}\text{d}^c\psi_n,z\right)-\frac{1}{N_n}\quad\text{for all }z\in E_{c_n}\left(\text{d}\text{d}^c\psi_n\right)\label{lelongineq}
		\shortintertext{and}
		\int_X|S_n|e^{-N_n\psi_n}&\leq C,\quad\quad\text{for $C>0$ independent of $n$}.\label{uniformbound2}
	\end{align}
	Note that $u$ is identically zero in a neighbourhood of any point $z\in E_{c_n}(\text{d}\text{d}^c\psi_n)$, so $v$ is holomorphic there and the finiteness of the integral $\int_X|v|^2e^{-2\theta}\text{d}V_{\omega}$ forces $v$ to vanish to an order greater than or equal to $N_1\nu(\text{d}\text{d}^c\psi_n,x)-1$ at $z$ (c.f. Kiselman \cite[Theorem 3.2]{Ki94}). We thus have
\[
	\nu\left(\frac{1}{N}\text{d}\text{d}^c\left(\log|S|\right),z\right)\geq\frac{N_1}{N}\nu\left(\text{d}\text{d}^c\psi_n,z\right)-\frac{1}{N}\geq\left(1-\frac{1}{\sqrt{N}}\right)\nu\left(\text{d}\text{d}^c\psi_n,z\right)-\frac{1}{N},
\]
for $N_1$ large enough, since $N=N_1+N_2$ and $N_2$ is fixed. This establishes the inequality~\eqref{lelongineq}. To see the inequality~\eqref{uniformbound2}, note that we have $\int_X|v|^2e^{-2\theta}\text{d}V_\omega\lesssim 1$ from~\eqref{uniformbound}, where the implied constant is independent of $n$; from this it follows that $\int_X|v|^2e^{-2N\psi}\text{d}V_{\omega}\lesssim 1$ as well. From this, we obtain
\[
	\int_X|S|e^{-N\psi}\text{d}V_{\omega}\lesssim\int_X|S|^2e^{-2N\psi}\text{d}V_\omega\lesssim 1.
\]

	To see (ii), the inequalities \eqref{prop3} and \eqref{prop4}, along with the Lelong--Poincar\'{e} equation and (ii) of Proposition~\ref{Guedjprop} implies that $T_n=\text{d}\text{d}^c\left(\frac{1}{N_nM_n}\log |S_n|\right)$ converges weakly towards $T$ in $X\setminus E^+(T)$. Since the decomposition theorem of Siu~\cite{Siu74} gives $T=\sum_i\lambda_i[Z_i]+R$, where $\lambda_i$ are positive constants, $Z_i$ are hypersurfaces, and $R$ is a residual current with $E_c(R)\geq 2$ for all $c>0$, the Hausdorff dimension of $E^+(R)=E^+(T)\setminus\bigcup_{i}Z_i$ is less than or equal than $2\dim(X)-4$, hence $T_n$ actually converges towards $T$ on $X\setminus\bigcup_iZ_i$ (e.g. \cite{FoSi95}).

	Also, note that $\limsup_{n\to\infty}\nu(T_n,x)\leq\nu(T,x)$ for all $x\not\in\bigcup_{i}Z_i$ since $T_n$ converges to $T$ in the weak sense of currents there (c.f. Demailly \cite[Proposition III.5.12]{De12}); hence the inequality~\eqref{lelongineq} and condition (iii) of Proposition~\ref{Guedjprop} gives that $\nu(T_n,x)\to\nu(T,x)$ for all $x\in E^{+}(T)\setminus\bigcup_iZ_i$ and hence $\nu(T_n,x)\to\nu(T,x)$ for all $x\not\in\bigcup_iZ_i$. Now, the sequence $\{T_n\}_n$ is bounded in the sense of currents by the inequality~\eqref{uniformbound2} and (ii) of Proposition~\ref{Guedjprop} (see Demailly~\cite[pg. 19]{De12}), so it admits a subsequence weakly converging to a positive closed current $T'$ of bidegree $(1,1)$ on $X$. Since $T'=T$ on $X\setminus\bigcup_iZ_i$ and $T'\geq T$ on $\bigcup_iZ_i$ by the inequality~\eqref{lelongineq}, it follows that $T=T'$ on all of $X$ because $c_1(L)=[T]=[T']$ and the fact that $X$ it is compact and K\"ahler. Therefore $T_n$ converges weakly towards $T$ on $X$ and $\nu(T_n,x)\to\nu(T,x)$ for all $x\in X$.

	Lastly, since $|S_n|>0$ on $K_n$ and $T_n\to T$, the varieties $\{S_n=0\}$ converge towards $\text{Supp }T$ in the Hausdorff metric.
	\end{proof}
\begin{rem}
	It should be noted that, while the focus of this note is on complex projective manifolds, $T$-polynomial convexity can be defined on Stein manifolds. In fact, on a Stein manifold, every line bundle is positive and every positive closed (1,1)-current satisfies condition (C), so Theorems~\ref{OkaWeil} and~\ref{HoloTPoly} can be stated in that setting with some hypotheses removed. The methods of proof are identical, with one exception: Stein manifolds are not compact, so in place of compactness of the complex projective manifold one instead takes advantage of the compactness of a large sublevel set of some strictly plurisubharmonic exhaustion function along with an appropriate patching argument.
\end{rem}


\printbibliography

\end{document}